\documentclass[12pt,a4paper, oneside, bold,secthm,seceqn,amsthm,ussrhead,reqno]{article}
\usepackage[utf8]{inputenc}
\usepackage[english]{babel}
\usepackage[symbol]{footmisc}
\usepackage{amssymb,amsmath,amsthm,amsfonts,xcolor,enumerate,hyperref, comment,longtable, cleveref}
\usepackage{verbatim}
\usepackage{times}
\usepackage{cite}
\usepackage{pdflscape}
\usepackage{ulem}
\usepackage[mathcal]{euscript}
\usepackage{tikz}
\usepackage{hyperref}
\usepackage{refcheck}
\usepackage{cancel}
\usepackage{stmaryrd}
 
\usepackage{amsfonts}
\usepackage{amssymb}
\usepackage{times}
\usepackage{xcolor}
\usepackage{tkz-graph}
\usepackage{url}
\usepackage{float}
\usepackage{tasks}
\usepackage{array}



\usepackage{cite}
\usepackage{hyperref}

 \usepackage{fancyhdr} 
\fancyhf{}
\cfoot{\thepage}
\pagestyle{fancy} 

\usepackage{amsfonts}
\usepackage{amsmath}
\usepackage{eurosym}
\usepackage{geometry}

\usepackage{caption,booktabs}

\captionsetup{
  justification = centering
}

\setcounter{MaxMatrixCols}{12}

\geometry{left=0.3in,right=0.4in,top=0.4in,bottom=0.2in}

\geometry{left=1in,right=1in,top=1in,bottom=1in}

\theoremstyle{plain}
\newtheorem{theorem}{Theorem}
\newtheorem{lemma}[theorem]{Lemma}
\newtheorem{proposition}[theorem]{Proposition}
\newtheorem{remark}[theorem]{Remark}
\newtheorem{example}[theorem]{Example}
\newtheorem{definition}[theorem]{Definition}
\newtheorem{corollary}[theorem]{Corollary}

\def\Lie{\mathrm{Lie}}

\usepackage{xcolor}

\usepackage{bbm}
\usepackage{graphicx}

\let\<\langle
\let\>\rangle

\def\Pre{\mathrm{Pre}}
\def\pre{\mathrm{pre}}

\def\NAP{\mathrm{NAP}}
\def\Lie{\mathrm{Lie}}

\def\Lie{\mathrm{Lie}}

\def\pre{\mathrm{pre}}

\usetikzlibrary{arrows}

\sloppy
\usepackage{fouriernc}
\usepackage{refcheck}

\begin{document}
 
 \bigskip

\noindent{\Large
Binary perm algebras and alternative algebras}

 \bigskip

\begin{center}

{\bf
  Abay Kunanbayev\footnote{Institute of Mathematics and Mathematical Modeling, Almaty, Kazakhstan; \    abai-aga@mail.ru} \& 
   Bauyrzhan Sartayev\footnote{Corresponding author: Narxoz University, Almaty, Kazakhstan; \    baurjai@gmail.com}
}
 
\end{center}

\

\noindent {\bf Abstract:}
{\it  
In this paper, we describe the defining identities of a variety of binary perm algebras, which is a subvariety of the variety of alternative algebras. In addition, we construct a basis of the free binary perm algebra and find a complete list of identities which satisfy binary perm algebra under commutator. 
}

 \bigskip 

\noindent {\bf Keywords}:
{\it 
perm algebra, 
alternative algebra, 
polynomial identities, 
binary algebra.}

\bigskip 

 \
 
\noindent {\bf MSC2020}:  
17A30, 17A50, 17D05, 17D10.

	 \bigskip

\ 

\


\section*{Introduction}

For the variety of algebras $\mathcal{A}$, we denote by $\mathcal{A}_2$ the variety of algebras whose every two-generated subalgebra belongs to $\mathcal{A}$. The analogous notion we use for $\mathcal{A}_1$, i.e., every one-generated subalgebra of $\mathcal{A}_1$ belongs to $\mathcal{A}$. In this direction, one of the well-studied classes of algebras is the variety of associative algebras, denoted by $\mathcal{A}s$. In \cite{Albert1}, it was proved that if $\mathcal{A}=\mathcal{A}s$, then $\mathcal{A}s_1$ is the variety of power-associative algebras. In \cite{Artin}, it was shown that the defining identities of the variety $\mathcal{A}s_2$ are
$$(ab)c-a(bc)=-(ac)b+a(cb),$$
$$(ab)c-a(bc)=-(ba)c+b(ac).$$
The identities above are known as the defining identities of the variety of alternative algebras, denoted by $\mathcal{A}lt$. Although there have been many studies on alternative algebras, the basis for a free alternative algebra is still unknown. For the variety of Lie algebras, the defining identities of a variety of binary Lie algebras were given in \cite{Gainov}. The result on the variety of binary Leibniz algebras was obtained in \cite{Leib_2}. For the variety of Zinbiel algebras, the result was given in \cite{Zinb_2}.

Let $\mathcal{A}(X)$ be a free algebra generated by a countable set $X$ of the corresponding variety of algebras $\mathcal{A}$. We denote by $\mathcal{A}^{(-)}$ the class of algebras obtained from $\mathcal{A}$ under the commutator. Coming back to associative algebras, the Poincare-Birkhoff-Witt theorem provides the embedding of any Lie algebra into appropriate associative algebra under commutator, and as a consequence, we have $\mathcal{A}s^{(-)}=\mathcal{L}ie$. In the case of alternative algebras, there is still no complete
answer. There is only known that $\mathcal{A}lt^{(-)}\subset \mathcal{M}alcev$, where $\mathcal{M}alcev$ corresponds to the variety of Malcev algebras. For all the mentioned varieties, we have the following inclusions:

\begin{picture}(30,80)
\put(215,57){$\subset$}
\put(190,44){\normalsize\rotatebox[origin=c]{270}{$\rightarrow$}\tiny{$(-)$}}
\put(215,32){$\subset$}
\put(240,44){\normalsize\rotatebox[origin=c]{270}{$\rightarrow$}\tiny{$(-)$}}
\put(183,31){$\mathcal{L}ie$}
\put(183,57){$\mathcal{A}s$}
\put(233,31){$\mathcal{A}lt^{(-)}$}
\put(268,31){$\subset\mathcal{M}alcev$}
\put(233,57){$\mathcal{A}lt$}
\end{picture}
\vspace*{-\baselineskip}

\noindent We see that the varieties on the left side of the diagram have a complete answer regarding free bases and a description of defining identities. However, for the varieties on the right, we do not have answers to the analogical questions. There have been attempts to give a partial answer to these problems. In \cite{Meta-Mal}, it was constructed a basis of the free metabelian Malcev algebra and free alternative algebra with additional identity $[a,b][c,d]=0$. The embedding of some ideals of Malcev algebras into alternative algebra is given in \cite{Filippov1983}. Some ideals of the free alternative algebra were considered in \cite{Filippov-Nil, Filippov-Triv}.
A base of the free alternative superalgebra on one odd generator was constructed in \cite{Shestakov1}. Also, in \cite{Shestakov2} it was proved that every Malcev superalgebra generated by an odd element is special. 

In this paper, we consider a well-known subvariety of the variety of associative algebras, called perm, and we construct a similar diagram for the considered variety.  For more details and history on perm algebras, see \cite{BoHou}, \cite{Perm 1}, \cite{KS2022}, \cite{Perm 2}. In \cite{MS2022}, it was proved that every metabelian Lie algebra can be embedded into appropriate perm algebra under the commutator. As a consequence, we have $\mathcal{P}erm^{(-)}=\mathcal{M}\mathcal{L}ie$, where $\mathcal{P}erm$ (or shortly, $\mathcal{P}$) and $\mathcal{M}\mathcal{L}ie$ are varieties of perm and metabelian Lie algebras, respectively. Then we have

\begin{picture}(30,80)
\put(215,57){$\subset$}
\put(190,44){\normalsize\rotatebox[origin=c]{270}{$\rightarrow$}\tiny{$(-)$}}
\put(215,32){$\subset$}
\put(240,44){\normalsize\rotatebox[origin=c]{270}{$\rightarrow$}\tiny{$(-)$}}
\put(173,31){$\mathcal{M}\mathcal{L}ie$}
\put(173,57){$\mathcal{P}erm$}
\put(233,31){$\mathcal{P}erm_2^{(-)}$}
\put(233,57){$\mathcal{P}erm_2$}
\end{picture}
\vspace*{-\baselineskip}

The paper is organized as follows:

In section 2, we consider a subvariety of the variety of alternative algebras with an additional identity of degree $3$. We prove that the obtained variety coincides with $\mathcal{P}_2$. Moreover, we construct a basis of algebra $\mathcal{P}_2(X)$.

In section 3, we find a complete list of identities which satisfies $\mathcal{P}_2$ algebra under the commutator. Since from the general point of view $\mathcal{P}_2^{(-)}$ is a subclass of the class of Malcev algebras, we construct a basis of $\mathcal{P}_2^{(-)}(X)$ algebra. The main motivation of this work can be presented in the form of a diagram as follows:

\begin{picture}(30,80)
\put(184,30){$\mathcal{P}erm_2\subset \mathcal{A}lt$}
\put(188,60){$\textrm{$\mathcal{P}erm$}\subset \mathcal{A}s$}
\put(173,0){$\mathcal{P}erm_2^{(-)}\subset 
\mathcal{M}alcev$}
\put(183,17){\tiny{$(-)$} \normalsize\rotatebox[origin=c]{270}{$\rightarrow$}}
\put(239,17){\normalsize\rotatebox[origin=c]{270}{$\rightarrow$}\tiny{$(-)$}}
\put(238,47){\normalsize\rotatebox[origin=c]{270}{$\subset$}}
\put(198,47){\normalsize\rotatebox[origin=c]{270}{$\subset$}}
\put(128,30){$\mathcal{M}\mathcal{L}ie$}
\put(278,30){$\mathcal{L}ie$}
\put(158,15){\normalsize\rotatebox[origin=c]{300}{$\subset$}}
\put(266,15){\normalsize\rotatebox[origin=c]{225}{$\subset$}}
\put(148,47){\tiny{$(-)$} \normalsize\rotatebox[origin=c]{225}{$\rightarrow$}}
\put(258,47){\normalsize\rotatebox[origin=c]{325}{$\rightarrow$}\tiny{$(-)$}}
\end{picture}

$ $

In section 4, we forget the associative identity in perm algebra, resulting in a nonassociative permutative (NAP) algebra. Novikov algebra provides an identity for NAP algebra, where the algebra is composed of right-commutative and left-symmetric identities. For more details and history on Novikov algebras, see \cite{SarKol} and \cite{DAS}. An algebra with only a left-symmetric identity is called a $\pre$-$\Lie$ algebra. Interestingly, we have $$\dim(\NAP(n))=\dim(\Pre\textrm{-}\Lie(n)),$$
where $\NAP(n)$ and $\Pre\textrm{-}\Lie(n)$ are 
$n$-th component of operads defined by varieties of algebras NAP and pre-Lie, respectively.

In \cite{RS+}, it was shown that all identities of the algebras $\NAP(X)$ and $\Pre$-$\Lie(X)$ under anti-commutator follow from only commutative identity. We study $\NAP(X)$ algebra under the commutator. While $\Pre\textrm{-}\Lie^{(-)}(X)$ algebra is a Lie algebra, we obtain a completely different result for $\NAP^{(-)}(X)$.

We consider all algebras over a field $K$ of characteristic $0$.

\section{Identities in free binary perm algebra}

In this section, we describe the defining identities of the variety of binary perm algebras and construct a basis of the free binary perm algebra.

\begin{definition}
An associative algebra is called perm if it satisfies the following additional identity:
$$abc-acb=0.$$
\end{definition}

Let us introduce the main object of this paper
\begin{definition}
Let $\mathcal{V}$ be a variety of algebras which is defined by the following identities:
\begin{equation}\label{right-alt}
(a,b,c)+(a,c,b)=0,
\end{equation}
\begin{equation}\label{left-alt}
(a,b,c)+(b,a,c)=0,
\end{equation}
\begin{equation}\label{binary-perm}
(ab)c+(cb)a=(ac)b+(ca)b,
\end{equation}
where $(a,b,c)$ stands for $(ab)c-a(bc)$.
\end{definition}

We denote by $\mathcal{P}$ and $\mathcal{P}(X)$ the variety of perm algebras and free perm algebra generated by a set $X$, respectively. Note that $\mathcal{P}$ is a subvariety of $\mathcal{V}$, and if $\mathcal{P}_2$ is a variety of binary perm algebras, then by straightforward calculations, we obtain $$\mathcal{P} \subseteq \mathcal{P}_2 \subseteq \mathcal{V}.$$

Let us recall a basis of the $\mathcal{P}(X)$ algebra, which is a set of monomials $\mathcal{A}$ of the following form: $$\mathcal{A}=\{x_{i_1}x_{i_2}x_{i_3}\ldots x_{i_n}|\;i_2\leq i_3\leq\ldots\leq i_n\}.$$

The main purpose is to prove that $\mathcal{P}_2=\mathcal{V}$ which can be proved by constructing a basis of the free algebra $\mathcal{V}(X)$.

\begin{lemma}
The following identities hold in algebra $\mathcal{V}(X)$:
\begin{equation}\label{v1}
(ab)(cd)=-((ca)d)b+(a(cd))b+((cd)a)b,
\end{equation}
\begin{equation}\label{v2}
((ab)c)d=((ac)d)b=((ad)b)c,
\end{equation}
\begin{equation}\label{v3}
a((bc)d)=((ac)b)d,
\end{equation}
\begin{equation}\label{v4}
(((ab)c)d)e=(((ab)c)e)d,
\end{equation}
\begin{equation}\label{v5}
((a(bc))d)e=(((ab)c)d)e.
\end{equation}
\end{lemma}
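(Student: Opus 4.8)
The plan is to read \eqref{right-alt} and \eqref{left-alt} as the single statement that the associator $(a,b,c)$ is \emph{totally skew-symmetric}: swapping the last two or the first two arguments reverses the sign, so $(a,b,c)$ is alternating under all of $S_3$. I would record the six resulting sign relations once and use them freely. The second tool is a cleaner form of \eqref{binary-perm}: expanding each product via $(xy)z=x(yz)+(x,y,z)$ and collecting the associator terms, skew-symmetry forces those terms to cancel, so that over $\mathcal{V}$ the identity \eqref{binary-perm} is equivalent to $a[b,c]=c[a,b]$, where $[x,y]=xy-yx$. Finally, since \eqref{right-alt}--\eqref{binary-perm} are multilinear, I may substitute arbitrary compound elements for $a,b,c$; this substitution trick is what links the length-four identities to the balanced product $(ab)(cd)$.

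With these in hand I would prove the identities in dependency order rather than as listed. The foundational step is \eqref{v2}, the assertion that the tail $b,c,d$ of the left-normed word $((ab)c)d$ may be cyclically permuted. I would obtain it by combining the right-alternative symmetric combination $((ab)c)d+((ab)d)c=(ab)(cd)+(ab)(dc)$ (from skew-symmetry in the last two arguments) with the expansions of $(ab)(cd)$ and $(ab)(dc)$ produced by substituting $cd$ and $dc$ for $c$ in \eqref{binary-perm}, the relation $a[b,c]=c[a,b]$ then being used to absorb the residual commutator terms. Once \eqref{v2} is available, \eqref{v1} is immediate: substituting $cd$ for $c$ in \eqref{binary-perm} gives $(ab)(cd)=(a(cd))b+((cd)a)b-((cd)b)a$, and the first-to-third equality of \eqref{v2} rewrites $((cd)b)a=((ca)d)b$, which is exactly the claimed right-hand side. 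Identity \eqref{v3} is proved the same way: expand $a((bc)d)$ through one associator and normalize the tail by \eqref{v2}.

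For the two degree-five identities I would reduce each to the vanishing of an associator sitting in a fixed position of a left-normed word. Concretely, since $a(bc)-(ab)c=-(a,b,c)$, identity \eqref{v5} is equivalent to $(((a,b,c)d)e)=0$; and after using skew-symmetry to swap $d,e$, identity \eqref{v4} amounts to $((ab)c)[d,e]+2((ab)c,d,e)=0$. Both I would attack by expressing the associator through commutators via the consequence $3(a,b,c)=[ab,c]-a[b,c]-[a,c]b$ of the alternative laws, and then applying $a[b,c]=c[a,b]$ repeatedly to collapse the resulting left-normed commutator words. The hard part will be precisely this propagation: skew-symmetry and \eqref{binary-perm} are individually too weak, and one must control how $a[b,c]=c[a,b]$ acts inside a depth-three product. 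I expect the bookkeeping in \eqref{v2} and in the vanishing statement $(((a,b,c)d)e)=0$ to be the delicate core, while \eqref{v1}, \eqref{v3} and the reduction of \eqref{v4} then follow formally.
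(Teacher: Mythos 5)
Your preparatory analysis is correct and worth keeping: in $\mathcal{V}$ the associator is indeed totally skew-symmetric; modulo that skew-symmetry, \eqref{binary-perm} is indeed equivalent to $a[b,c]=c[a,b]$ (the four associator terms in the expansion cancel in pairs); given \eqref{v2}, your derivation of \eqref{v1} from the substitution $c\mapsto cd$ in \eqref{binary-perm} together with $((cd)b)a=((ca)d)b$ is valid; and your reformulations of the degree-five identities are exact, namely \eqref{v5} is equivalent to $((a,b,c)d)e=0$ and \eqref{v4} to $((ab)c)[d,e]+2((ab)c,d,e)=0$. Note also that the paper proves this lemma purely by machine computation (the Albert system), so a completed derivation along your lines would be a genuinely different, human-readable proof rather than a rediscovery of the paper's argument.

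But as it stands the proposal has a real gap: the statements carrying all the content --- \eqref{v2}, \eqref{v3}, and the two degree-five vanishing statements --- are sketched, not proved, and the sketches have concrete weaknesses. For \eqref{v2}, your starting relation $((ab)c)d+((ab)d)c=(ab)(cd)+(ab)(dc)$ and the binary-perm expansions of its right-hand side are all symmetric under $c\leftrightarrow d$, while \eqref{v2} is not; the entire asymmetry must therefore be produced by the unexplained step of letting $a[b,c]=c[a,b]$ ``absorb the residual commutator terms'', which is exactly the manipulation you do not exhibit. For \eqref{v3}, it does not ``follow formally'': any one-associator expansion of $a((bc)d)$ (for instance $a((bc)d)=(a(bc))d+((bc)a)d-(bc)(ad)$ via left-alternativity) produces words of the pattern $(x(yz))w$, and these are basis monomials of $\mathcal{V}(X)$ in degree $4$ (cf. $(x_k(x_ix_j))x_l\in\mathcal{B}_4$), so they cannot be removed by \eqref{v2}; separate relations for that pattern are required. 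Finally, for \eqref{v4} and \eqref{v5} you explicitly defer the propagation of $a[b,c]=c[a,b]$ inside depth-three products --- i.e.\ the entire proof --- as ``bookkeeping'' you ``expect'' to work. What is missing is precisely what the paper's computer check supplies: a verification, in the multilinear components of degrees $4$ and $5$, that these elements lie in the ideal of identities generated by \eqref{right-alt}, \eqref{left-alt}, \eqref{binary-perm}. Until those computations are actually carried out (by hand or by machine), this is a plausible plan, not a proof.
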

\begin{proof}
It can be proved using computer algebra as software programs  Albert \cite{Albert}.
\end{proof}

For alphabet $X$, we define sets $\mathcal{B}_1$, $\mathcal{B}_2$, $\mathcal{B}_3$ and $\mathcal{B}_4$ as follows:
$$\mathcal{B}_1=\{x_i\},\mathcal{B}_2=\{x_ix_r\},$$
$$\mathcal{B}_3=\{(x_ix_j)x_k,(x_ix_k)x_j,(x_jx_i)x_k,(x_kx_i)x_j,x_k(x_ix_j), (x_jx_i)x_i,(x_ix_i)x_j,(x_ix_i)x_i\},$$
$$\mathcal{B}_4=\{((x_ix_j)x_k)x_l,((x_ix_j)x_l)x_k,((x_jx_i)x_k)x_l,((x_kx_i)x_j)x_l,((x_lx_i)x_j)x_k,(x_k(x_ix_j))x_l,$$
$$((x_jx_i)x_i)x_i,((x_ix_i)x_i)x_j,((x_jx_i)x_i)x_j,((x_ix_i)x_j)x_j,((x_kx_i)x_i)x_j,((x_jx_i)x_i)x_k,((x_ix_i)x_j)x_k\},$$
where $i\leq j\leq k\leq l$. Also, we set
$$\mathcal{B}_n=\{(\ldots((x_{i_1}x_{i_2})x_{i_3})\ldots)x_{i_n}|\;i_2\leq i_3\leq\ldots\leq i_n\},$$
where $n\geq 5$.

\begin{theorem}\label{basis}
The set $\bigcup_i\mathcal{B}_i$ is a basis of algebra $\mathcal{V}(X)$.
\end{theorem}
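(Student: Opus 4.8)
The plan is to prove the two defining properties of a basis separately: that $\bigcup\mathcal{B}_n$ spans $\mathcal{V}(X)$, and that it is linearly independent. For spanning I would argue by induction on degree, using the reductions of the preceding lemma to bring every monomial into the left-normed, tail-sorted normal form that defines $\mathcal{B}_n$; for independence I would exploit the inclusion $\mathcal{P}\subseteq\mathcal{V}$, which yields a surjection of free algebras $\pi\colon\mathcal{V}(X)\twoheadrightarrow\mathcal{P}(X)$, and compare with the known basis $\mathcal{A}$ of $\mathcal{P}(X)$ in degrees $n\ge 5$, treating the low degrees by hand.

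First I would establish spanning. Writing an arbitrary monomial as a product $u\cdot v$ of two monomials of strictly smaller degree, I reduce $u$ and $v$ to normal form by induction, so it remains to rewrite a product of two normal-form monomials. Identity \eqref{v1} disposes of the case where both factors have degree at least two, expressing $(ab)(cd)$ through left-normed monomials, while \eqref{v3} and \eqref{v5} left-normalize the remaining right-bracketed pieces, so that after this step every monomial is a linear combination of left combs $(\ldots((x_{i_1}x_{i_2})x_{i_3})\ldots)x_{i_n}$. It then remains to sort the tail $i_2,i_3,\ldots,i_n$ into non-decreasing order: identity \eqref{v2} permutes the factors in positions $2,3,4$ of a degree-four comb and \eqref{v4} transposes the last two factors of a comb of degree at least five, and applying these to the nested subcombs generates enough transpositions of the positions $\ge 2$ to sort the whole tail once $n\ge 5$. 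For $n\le 4$ the available relations do not generate the full symmetric group on the tail, which is exactly why $\mathcal{B}_3$ and $\mathcal{B}_4$ retain their extra ``exceptional'' monomials; here I would simply verify by direct rewriting that the listed sets span the degree-three and degree-four components.

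For linear independence I would use the surjection $\pi$ induced by $x_i\mapsto x_i$. Since in $\mathcal{P}(X)$ every monomial is associative and satisfies $abc=acb$, the image $\pi\bigl((\ldots(x_{i_1}x_{i_2})\ldots)x_{i_n}\bigr)$ is the perm monomial $x_{i_1}x_{i_2}\cdots x_{i_n}$, so for $n\ge 5$ the map $\pi$ carries $\mathcal{B}_n$ bijectively onto the degree-$n$ part of $\mathcal{A}$, which is a basis of $\mathcal{P}(X)$. Consequently $\mathcal{B}_n$ is linearly independent in $\mathcal{V}(X)$ for $n\ge 5$; combined with the spanning step this proves the theorem in all degrees $\ge 5$, and the equality of cardinalities $|\mathcal{B}_n|=|\mathcal{A}_n|$ is then automatic.

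The main obstacle is the independence of $\mathcal{B}_3$ and $\mathcal{B}_4$, where the comparison with $\mathcal{P}(X)$ breaks down because $\pi$ collapses precisely the monomials by which $\mathcal{V}$ exceeds $\mathcal{P}$. To rule out a hidden relation among these low-degree monomials I would either exhibit an explicit $\mathcal{V}$-algebra (or a multiplication table on the span of $\mathcal{B}_3\cup\mathcal{B}_4$) on which they evaluate to independent elements, or equivalently verify confluence of the rewriting system coming from \eqref{v1}--\eqref{v5} and the defining identities in these degrees, so that the normal forms are forced to be independent. This finite check --- most cleanly carried out with the computer algebra system already invoked for the lemma --- is where the real work lies.
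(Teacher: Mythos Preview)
Your proposal is correct and follows essentially the same route as the paper: spanning in degrees $\ge 5$ is obtained by using the identities \eqref{v1}--\eqref{v5} to left-normalize and then sort the tail, independence in those degrees comes from the comparison with $\mathcal{P}(X)$ (the paper phrases this as a dimension inequality $\dim\mathcal{P}(X)\le\dim\mathcal{V}(X)$, which is exactly your surjection $\pi$), and the low degrees $3$ and $4$ are handled by a finite computer check with \textsc{Albert}. The only cosmetic difference is that the paper organizes the spanning induction with base case $n=5$ and a case split on $\deg(\mathcal{M}_s)$, whereas you split on whether both factors have degree $\ge 2$; the underlying reductions are the same.
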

\begin{proof}
For degrees up to $4$, the result can be verified by software programs Albert \cite{Albert}. Let us show that starting from degree $5$ the spanning monomials of $\mathcal{V}(X)$ are elements from $\bigcup_i\mathcal{B}_i$, where $i\geq 5$. To do that we first show that any monomial of $\mathcal{V}(X)$ can be written as a linear combination of left-normed monomials. Let us start the induction on the length of the monomials. The base of induction is $n=5$. 
By $(\ref{v3})$, $\mathcal{B}_4$ and $(\ref{v5})$, the monomials $x_{i_1}(m_4)$ can be written as a linear combination of left-normed monomials, where $m_n$ is arbitrary nonassociative monomial of degree $n$. Let us show it explicitly:

\noindent by $\mathcal{B}_4$, we can rewrite any monomial of $m_4$ as a linear combination of 
$$\sum_{j,k} ((x_{j_1}x_{j_2})x_{j_3})x_{j_4}+(x_{k_1}(x_{k_2}x_{k_3}))x_{k_4}).$$
From this follows that any monomial $x_{i_1}(m_4)$ can be written as
$$x_{i_1}(\sum_{j,k} ((x_{j_1}x_{j_2})x_{j_3})x_{j_4}+(x_{k_1}(x_{k_2}x_{k_3}))x_{k_4})=\sum_{j,k} x_{i_1}(((x_{j_1}x_{j_2})x_{j_3})x_{j_4})+x_{i_1}((x_{k_1}(x_{k_2}x_{k_3}))x_{k_4}).$$
By (\ref{v3}), we obtain
\begin{multline*}
\sum_{j,k} x_{i_1}(((x_{j_1}x_{j_2})x_{j_3})x_{j_4})+x_{i_1}((x_{k_1}(x_{k_2}x_{k_3}))x_{k_4})=\\
\sum_{j,k} ((x_{i_1}x_{j_3})(x_{j_1}x_{j_2}))x_{j_4}+((x_{i_1}(x_{k_1}x_{k_2}))x_{k_3})x_{k_4}.
\end{multline*}
By $\mathcal{B}_4$, the monomials of the form $((x_{i_1}x_{j_3})(x_{j_1}x_{j_2}))x_{j_4}$ can be written as a linear combination of $(((x_{p_1}x_{p_2})x_{p_3})x_{p_4})x_{p_5}$ and $((x_{p_1}(x_{p_2}x_{p_3}))x_{p_4})x_{p_5}$. It is remained to use (\ref{v5}) on monomials
$((x_{p_1}(x_{p_2}x_{p_3}))x_{p_4})x_{p_5}$ and $((x_{i_1}(x_{k_1}x_{k_2}))x_{k_3})x_{k_4}$.
In subsequent cases, we will omit explicit calculations, since they can be proved similarly.

To represent the monomials $(x_{i_1}x_{i_2})((x_{i_3}x_{i_4})x_{i_5})$ and $(x_{i_1}x_{i_2})(x_{i_3}(x_{i_4}x_{i_5}))$ as a linear combination of left-normed monomials, we use $(\ref{v1})$, $\mathcal{B}_4$ and $(\ref{v5})$. For monomials $((x_{i_1}x_{i_2})x_{i_3})(x_{i_4}x_{i_5})$ and $(m_4)x_{i_5}$, we use $(\ref{v2})$, $\mathcal{B}_4$ and $(\ref{v5})$. For the last case $(x_{i_1}(x_{i_2}x_{i_3}))(x_{i_4}x_{i_5})$, we apply $(\ref{left-alt})$ and use previous cases. 

Let us show that $(m_t)(m_s)$ can be written as the left-normed monomials, where $s+t>5$. If $s>3$ then we use basis monomials of $\mathcal{B}_4$, $(\ref{v3})$ and inductive hypothesis. If $s\leq 3$ then we consider $3$ cases. If $s=1$ then it immediately follows from the inductive hypothesis. If $s=2$ then it follows from $\mathcal{B}_4$, ($\ref{v5}$), ($\ref{v4}$) and inductive hypothesis. If $s=3$ then $m_3$ is $(x_ix_j)x_k$ or $x_i(x_jx_k)$. If $m_3=(x_ix_j)x_k$ then we use $(\ref{v3})$ and inductive hypothesis. If $m_3=x_i(x_jx_k)$ then we need to consider the following additional cases which are $deg(m_t)=3$ and $deg(m_t)>3$. The case $deg(m_t)=3$ can be proved as the base of induction. The case $deg(m_t)>3$ can be proved
using $\mathcal{B}_4, (\ref{v5})$ and $(\ref{v4})$. All considered cases prove that any nonassociative monomial $m_n$ can be expressed as a linear combination of left-normed monomials, where $n\geq 5$.

By $(\ref{v2})$ and $(\ref{v4})$, we can order the generators 
$x_{i_2},x_{i_3},\ldots,x_{i_n}$ in left normed monomial $$(\ldots((x_{i_1}x_{i_2})x_{i_3})\ldots)x_{i_n},$$ where $n\geq 5$. Before we noted that $\mathcal{P}\subset\mathcal{V}$, so the dimension of $\mathcal{P}(X)$ gives the lower bound of dimension $\mathcal{V}(X)$. In the other words, $\dim(\mathcal{P}(X))\leq\dim(\mathcal{V}(X))$. However, starting from degree $5$ we have $\dim(\mathcal{P}(X))=\dim(\mathcal{V}(X))$ which means that the monomials of $\bigcup_i \mathcal{B}_i$ are linearly independent.
\end{proof}

\begin{corollary}
$$\mathcal{P}_2=\mathcal{V}.$$
\end{corollary}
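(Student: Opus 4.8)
The plan is to prove the two inclusions separately. One direction, $\mathcal{P}_2\subseteq\mathcal{V}$, is already recorded in the text, so the whole content is the reverse inclusion $\mathcal{V}\subseteq\mathcal{P}_2$. Since $\mathcal{P}_2$ is by definition the variety all of whose $2$-generated subalgebras are perm, and since every $2$-generated subalgebra of an algebra $A\in\mathcal{V}$ is a homomorphic image of the free $2$-generated algebra $\mathcal{V}(x_1,x_2)$ (send $x_1,x_2$ to the two chosen elements), it suffices to show that $\mathcal{V}(x_1,x_2)$ is itself a perm algebra: then each such subalgebra, being a quotient of a perm algebra, lies in $\mathcal{P}$, whence $A\in\mathcal{P}_2$. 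Thus the task reduces to analysing the free $\mathcal{V}$-algebra on two generators.

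For this I would invoke Theorem~\ref{basis}. Because $\mathcal{P}\subseteq\mathcal{V}$, the identity map on generators induces a surjection $\mathcal{V}(x_1,x_2)\twoheadrightarrow\mathcal{P}(x_1,x_2)$, so it is enough to check that this map is injective, i.e.\ that the two algebras have equal dimension in every degree. The basis $\bigcup\mathcal{B}_n$ of $\mathcal{V}(X)$ restricts to $\{x_1,x_2\}$ by keeping only those monomials using no letter other than $x_1,x_2$. For $n\ge 5$ the set $\mathcal{B}_n$ consists of the left-normed monomials $(\ldots((x_{i_1}x_{i_2})x_{i_3})\ldots)x_{i_n}$ with $i_2\le\cdots\le i_n$, and on two letters these are exactly the normal forms of the perm basis $\mathcal{A}$; hence the graded dimensions of $\mathcal{V}(x_1,x_2)$ and $\mathcal{P}(x_1,x_2)$ already agree for all $n\ge 5$. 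In degrees $1$ and $2$ there are no relations and the agreement is immediate, and in degrees $3$ and $4$ one compares the two-letter parts of $\mathcal{B}_3$ and $\mathcal{B}_4$ against $\mathcal{A}$ directly (these are the degrees already certified by Albert in the proof of Theorem~\ref{basis}). Equal finite dimensions in each degree force the surjection to be an isomorphism, so $\mathcal{V}(x_1,x_2)\cong\mathcal{P}(x_1,x_2)$ is perm.

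Combining the two steps gives $\mathcal{V}\subseteq\mathcal{P}_2$, and with $\mathcal{P}_2\subseteq\mathcal{V}$ the equality $\mathcal{P}_2=\mathcal{V}$ follows. I expect the only genuinely delicate point to be the low-degree bookkeeping in degrees $3$ and $4$: one must confirm that on two generators no ``extra'' $\mathcal{V}$-monomials survive beyond the perm ones, that is, that the identity \eqref{binary-perm} really collapses the relevant two-letter quotient to the perm dimension there (as it does in degree $3$, where right- and left-alternativity already force $2$-generated associativity and \eqref{binary-perm} then yields precisely the perm relations). As a sanity check one may note that $\mathcal{V}\subseteq\mathcal{A}lt$, so by Artin's theorem $\mathcal{V}(x_1,x_2)$ is associative; this makes the comparison with the associative perm normal forms routine, and for $n\ge 5$ it is handed to us outright by Theorem~\ref{basis}.
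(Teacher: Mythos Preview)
Your argument is correct and is a genuine variant of the paper's. Both proofs rest on Theorem~\ref{basis}, but they deploy it differently. The paper works at the level of identities on an arbitrary generating set: since $\dim\mathcal{P}(X)_n=\dim\mathcal{V}(X)_n$ for $n\ge 5$, the squeeze $\mathcal{P}\subseteq\mathcal{P}_2\subseteq\mathcal{V}$ forces $\mathcal{P}_2(X)_n=\mathcal{V}(X)_n$ for $n\ge 5$, and a direct degree-$4$ check finishes the job. You instead invoke the very definition of $\mathcal{P}_2$ and reduce everything to the single free object $\mathcal{V}(x_1,x_2)$: once that algebra is shown to be perm (by matching graded dimensions with $\mathcal{P}(x_1,x_2)$), every $2$-generated subalgebra of any $\mathcal{V}$-algebra is a quotient of a perm algebra and hence perm. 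Your appeal to Artin's theorem is a nice structural shortcut the paper does not use: it makes $\mathcal{V}(x_1,x_2)$ associative outright, so the low-degree comparison becomes the elementary check that identity~\eqref{binary-perm} specialised to two letters yields exactly the perm relations (e.g.\ setting $a=b$ or $b=c$ in the associative form $abc+cba=acb+cab$ already gives $x_1x_1x_2=x_1x_2x_1$ and $x_2x_2x_1=x_2x_1x_2$, which are all the nontrivial degree-$3$ perm relations on two letters). The paper's route has the advantage of simultaneously pinning down $\mathcal{P}_2(X)$ for every $X$, while yours is conceptually cleaner and makes transparent why ``binary perm'' is the right name.
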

\begin{proof}
To prove it we show that $\mathcal{P}_2\supseteq\mathcal{V}$, i.e., all identities of $\mathcal{P}_2$ hold in $\mathcal{V}$. By straightforward calculations, it can be shown that any identity of degree 3 or 4, whose two generated forms belong to $\mathcal{P}$, follows from (\ref{right-alt}), (\ref{left-alt}) and (\ref{binary-perm}).
By Theorem \ref{basis}, starting from degree $5$ the dimension and basis of algebras $\mathcal{P}(X)$ and $\mathcal{V}(X)$ are the same. Since $\mathcal{P}\subseteq\mathcal{P}_2\subseteq\mathcal{V}$ and $\dim(\mathcal{P})\leq\dim(\mathcal{P}_2)\leq\dim(\mathcal{V})$, the dimension and basis of algebra $\mathcal{P}_2(X)$ from degree $5$ also is the same with $\mathcal{V}(X)$. So, all identities of $\mathcal{P}_2$ follow from (\ref{right-alt}), (\ref{left-alt}) and (\ref{binary-perm}). Finally, we have $\mathcal{P}_2=\mathcal{V}$.
\end{proof}

Although the multiplication table for two basis monomials of $\mathcal{P}(X)$ looks like an associative-commutative algebra and from degree $5$ the dimensions of $\mathcal{P}(X)$ and $\mathcal{P}_2(X)$ are the same, the multiplication table for $\mathcal{P}_2(X)$ is completely different, i.e., the multiplication of two basis monomials can be the sum of several basis monomials. The explicit form of the multiplication table can be obtained from the proof of Theorem $\ref{basis}$.

\section{Binary perm algebras under commutator}

In this section, we consider a class of algebras $\mathcal{P}_2^{(-)}$ which are obtained from 
$\mathcal{P}_2$ under commutator. Since $\mathcal{P}_2$ is a subvariety of $\mathcal{A}lt$, we have the following inclusions:
$$\mathcal{P}_2^{(-)}\subset\mathcal{A}lt^{(-)}\subset \mathcal{M}alcev.$$
The problem of finding a complete list of identities of $\mathcal{A}lt^{(-)}(X)$ algebra is still an open problem. Additionally, the basis of a free Malcev algebra is unknown. Therefore, we describe a complete list of identities of algebra $\mathcal{P}_2^{(-)}(X)$. Moreover, we construct the basis of algebra $\mathcal{P}_2^{(-)}(X)$.

\begin{proposition}
An anti-commutative algebra $\mathcal{P}_2^{(-)}(X)$ satisfies the following additional identities:
\begin{equation}\label{c1}
[[a,b],[c,d]]=[a,[[d,b],c]]+[d,[[b,c],a]]+[b,[[c,a],d]]+[c,[[a,d],b]],
\end{equation}
\begin{equation}\label{c2}
[[[a,d],c],b]=[[[a,d],b],c]+[[[a,b],d],c]-[[[a,b],c],d],
\end{equation}
\begin{equation}\label{c3}
[[[[a,b],c],d],e]=[[[[a,b],d],c],e]=[[[[a,b],c],e],d],
\end{equation}
\begin{equation}\label{c4}
[[J[a,b,c],d],e]=0.
\end{equation}
\end{proposition}
\begin{proof}
Straightforward calculations give the result.
\end{proof}

\begin{theorem}
All identities of $\mathcal{P}_2^{(-)}(X)$ algebra are consequence of anti-commutative, $(\ref{c1})$, $(\ref{c2})$, $(\ref{c3})$ and $(\ref{c4})$ identities.
\end{theorem}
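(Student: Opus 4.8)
The plan is to show that the four identities listed are not merely valid in $\mathcal{P}_2^{(-)}$ but in fact \emph{generate} all identities, which is a dimension-counting argument entirely parallel to the proof of Theorem \ref{basis}. The strategy is to introduce the variety $\mathcal{W}$ of anti-commutative algebras defined by identities (\ref{c1})--(\ref{c4}), observe that $\mathcal{P}_2^{(-)}\subseteq\mathcal{W}$ by the previous theorem, and then prove the reverse containment of free algebras $\dim\mathcal{W}(X)\le\dim\mathcal{P}_2^{(-)}(X)$ in each multidegree. Since $\mathcal{P}_2^{(-)}(X)$ sits inside $\mathcal{P}_2(X)$ as the subalgebra generated by $X$ under the commutator, its dimensions are computable from the explicit basis $\bigcup\mathcal{B}_n$ furnished by Theorem \ref{basis}; these dimensions give the lower bound. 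It then suffices to produce a spanning set for $\mathcal{W}(X)$ of matching cardinality in every degree, and the two bounds force equality of varieties.

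First I would build a candidate basis for $\mathcal{W}(X)$. Exactly as in Theorem \ref{basis}, I would use the anti-commutative identity to reduce every bracket monomial to a left-normed one $[[\ldots[[x_{i_1},x_{i_2}],x_{i_3}],\ldots],x_{i_n}]$, carrying out a length induction; the Jacobi-type relation (\ref{c1}) replaces the rewriting supplied by (\ref{v1}) in the associative setting, handling the products $[[\cdot,\cdot],[\cdot,\cdot]]$ that obstruct left-normalization. Next I would use (\ref{c3}), which lets one transpose adjacent entries from the fourth position onward, to impose the ordering $i_3\le i_4\le\cdots\le i_n$ on the tail, and (\ref{c2}) to normalize the behavior of the third slot; the degenerate identity (\ref{c4}) kills the monomials where the Jacobian $J[a,b,c]$ is multiplied on the right, trimming the low-degree exceptional families just as the special elements of $\mathcal{B}_3$ and $\mathcal{B}_4$ were trimmed. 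The outcome should be an explicit spanning set $\mathcal{C}=\bigcup\mathcal{C}_n$, with the small degrees $3$ and $4$ listed by hand and the generic shape stabilizing for $n\ge 5$.

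The crux of the argument, and the step I expect to be the main obstacle, is the linear independence of $\mathcal{C}$, equivalently the equality $\dim\mathcal{W}(X)=\dim\mathcal{P}_2^{(-)}(X)$ degree by degree. For the upper bound $\dim\mathcal{W}(X)\le|\mathcal{C}|$ one only needs the rewriting above; for the matching lower bound one must compute the dimensions of the free commutator algebra $\mathcal{P}_2^{(-)}(X)$ directly inside $\mathcal{P}_2(X)$. I would do this by expressing each commutator monomial in terms of the basis $\bigcup\mathcal{B}_n$ of $\mathcal{P}_2(X)$ and determining the rank of the resulting matrix in each multidegree—this is the genuinely computational heart and is best verified in low degrees by Albert \cite{Albert}, with the stable range $n\ge 5$ treated by a uniform counting argument. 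Once the two dimension counts coincide, the monomials of $\mathcal{C}$ are forced to be linearly independent, so $\mathcal{W}(X)$ and $\mathcal{P}_2^{(-)}(X)$ have identical Hilbert series and identical bases; hence $\mathcal{W}=\mathcal{P}_2^{(-)}$ and every identity of $\mathcal{P}_2^{(-)}(X)$ follows from the anti-commutative identity together with (\ref{c1})--(\ref{c4}), as claimed.
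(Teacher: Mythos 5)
Your overall skeleton is the same as the paper's: introduce the auxiliary variety (your $\mathcal{W}$, the paper's $\mathcal{C}$) defined by anti-commutativity and (\ref{c1})--(\ref{c4}), use those identities to rewrite every bracket monomial into a left-normed spanning set with ordered tail, and then squeeze $\dim\mathcal{P}_2^{(-)}(X)$ between a known lower bound and the cardinality of that spanning set. The rewriting half of your argument is essentially the paper's.

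The genuine gap is in your lower bound. You propose to compute $\dim\mathcal{P}_2^{(-)}(X)$ directly, by expanding commutator monomials in the basis $\bigcup\mathcal{B}_n$ of $\mathcal{P}_2(X)$ and finding the rank of the resulting matrices ``in each multidegree,'' verifying low degrees with Albert and handling $n\ge 5$ by ``a uniform counting argument.'' But that uniform argument is precisely the hard content of the theorem, and you never supply it: a computer check only covers finitely many degrees, and determining the rank of the commutator subspace inside $\mathcal{P}_2(X)$ for \emph{all} $n$ is not a routine consequence of knowing a basis of $\mathcal{P}_2(X)$ --- it amounts to characterizing which elements of $\mathcal{P}_2(X)$ are generated by commutators, which is a theorem in its own right. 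As written, the linear independence of your spanning set $\mathcal{C}$ is therefore not established. The paper closes exactly this hole with no computation at all: since $\mathcal{P}\subseteq\mathcal{P}_2$ and $\mathcal{P}^{(-)}=\mathcal{M}\mathcal{L}ie$ (the metabelian Lie result of \cite{MS2022}), one has $\mathcal{M}\mathcal{L}ie\subset\mathcal{P}_2^{(-)}\subseteq\mathcal{C}$, hence $\dim(\mathcal{M}\mathcal{L}ie(X))\le\dim(\mathcal{P}_2^{(-)}(X))\le\dim(\mathcal{C}(X))$; Bahturin's explicit basis of the free metabelian Lie algebra \cite{Bahturin1973} shows that from degree $5$ on the outer two dimensions coincide, which forces all three to be equal and makes the spanning set a basis. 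If you want to keep your direct-computation route, you would in effect have to reprove the result of \cite{MS2022} inside $\mathcal{P}_2(X)$; citing it instead is what makes the proof finite.
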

\begin{proof}
It is easy to check that there are no identities of degree $3$ and only $2$ identities of degree $4$. We denote by $\mathcal{C}(X)$ a free anti-commutative algebra with additional identities $(\ref{c1})$, $(\ref{c2})$, $(\ref{c3})$ and $(\ref{c4})$. We have to show that $\mathcal{C}(X)\cong\mathcal{P}_2^{(-)}(X)$.
To do that we first show that starting from degree $5$ the spanning elements of algebra $\mathcal{C}(X)$ are
$$\mathcal{C}_n=\{[[\ldots[[x_{i_1},x_{i_2}],x_{i_3}]\ldots],x_{i_n}]|\;i_1<i_2\leq i_3\leq\ldots\leq i_n\}.$$
Anti-commutative identity, $(\ref{c1})$ and $(\ref{c2})$ provide that any monomial of $\mathcal{C}(X)$ can be written as a linear combination of left-normed monomials with additional condition $i_1<i_2$. To prove it we start induction on the length of monomials from $\mathcal{C}_n$. The base of induction is monomials up to degree $4$. For the monomial of degree $n$, by inductive hypothesis, we consider only one case
$$[[[A_1],x_{i_1}],[[A_2],x_{i_2}]],$$
where $[A_1],[A_2]$ and $x_{i_1},x_{i_2}$ are left-normed words and generators, respectively. Using (\ref{c1}) and anti-commutative identity, we have
\begin{multline*}
[[[A_1],x_{i_1}],[[A_2],x_{i_2}]]=[A_1,[[x_{i_2},x_{i_1}],A_2]]+[x_{i_2},[[x_{i_1},A_2],A_1]]+
[x_{i_1},[[A_2,A_1],x_{i_2}]]+\\
[A_2,[[A_1,x_{i_2}],x_{i_1}]]=-[[[x_{i_2},x_{i_1}],A_2],A_1]-[[[x_{i_1},A_2],A_1],x_{i_2}]-\\
[[[A_2,A_1],x_{i_2}],x_{i_1}]-
[[[A_1,x_{i_2}],x_{i_1}],A_2].
\end{multline*}
By the inductive hypothesis, the monomials $[[[x_{i_1},A_2],A_1],x_{i_2}]$ and $[[[A_2,A_1],x_{i_2}],x_{i_1}]$ can be written as a linear combination of left-normed monomials. For $[[[A_1,x_{i_2}],x_{i_1}],A_2]$, it is enough use (\ref{c2}) and inductive hypothesis. For monomial $[[[x_{i_2},x_{i_1}],A_2],A_1]$, we repeat the given manipulations, and finally, we obtain a linear combination of left-normed monomials and monomial of the form
$$[[x_{j_1},x_{j_2}],[A_j,x_{j_3}]].$$
By (\ref{c1}) and (\ref{c2}), we obtain
\begin{multline*}
[[x_{j_1},x_{j_2}],[A_j,x_{j_3}]]=[x_{j_1},[[x_{j_3},x_{j_2}],A_j]]+[x_{j_3},[[x_{j_2},A_j],x_{j_1}]]+[x_{j_2},[[A_j,x_{j_1}],x_{j_3}]]+\\
[A_j,[[x_{j_1},x_{j_3}],x_{j_2}]]=
-[[[x_{j_3},x_{j_2}],A_j],x_{j_1}]-[[[x_{j_2},A_j],x_{j_1}],x_{j_3}]-[[[A_j,x_{j_1}],x_{j_3}],x_{j_2}]-\\
[[[x_{j_1},x_{j_3}],A_j],x_{j_2}]-[[[x_{j_1},A_j],x_{j_3}],x_{j_2}]+[[[x_{j_1},A_j],x_{j_2}],x_{j_3}].
\end{multline*}
It is remain to use inductive hypothesis.

The identities $(\ref{c3})$ and $(\ref{c4})$ give opportunity to order $x_{i_2},x_{i_3},\ldots x_{i_n}$. Remain to note that $\mathcal{M}\mathcal{L}ie\subset\mathcal{P}_2^{(-)}\subseteq \mathcal{C}$, where $\mathcal{C}$ is a variety defined by anti-commutative, $(\ref{c1})$, $(\ref{c2})$, $(\ref{c3})$ and $(\ref{c4})$ identities. These inclusions give $\dim(\mathcal{M}\mathcal{L}ie(X))\leq\dim(\mathcal{P}_2^{(-)}(X))\leq\dim(\mathcal{C}(X))$. A basis and dimension of free metabelian Lie algebra was given in \cite{Bahturin1973}, and starting from degree $5$, we have $\dim(\mathcal{M}\mathcal{L}ie(X))=\dim(\mathcal{C}(X))$ which means that the bases of $\mathcal{P}_2^{(-)}(X)$ and $\mathcal{C}(X)$ algebras are the same. We obtain the required isomorphism.
\end{proof}

\begin{theorem}
The identities $(\ref{c3})$, $(\ref{c4})$ are consequence of anti-commutative identity and identities $(\ref{c1})$, $(\ref{c2})$. The anti-commutative identity and identities $(\ref{c1})$, $(\ref{c2})$ are independent.
\end{theorem}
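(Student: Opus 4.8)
The statement comprises two separate assertions, and I would prove them in turn.

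For the first assertion, let $\mathcal{D}$ be the variety of anti-commutative algebras defined by $(\ref{c1})$ and $(\ref{c2})$ only, so that $\mathcal{C}\subseteq\mathcal{D}$ because $\mathcal{C}$ is cut out by the additional relations $(\ref{c3})$ and $(\ref{c4})$. Since $(\ref{c3})$ and $(\ref{c4})$ are multilinear of degree $5$ in five distinct variables and $\mathrm{char}\,K=0$, it suffices to show that they vanish in the degree-$5$ (multilinear) component of $\mathcal{D}(X)$; equivalently, that the canonical surjection $\mathcal{D}(X)\twoheadrightarrow\mathcal{C}(X)$ is injective in degree $5$. The inclusion $\mathcal{C}\subseteq\mathcal{D}$ gives $\dim\mathcal{C}(X)_5\le\dim\mathcal{D}(X)_5$, so it is enough to establish the reverse inequality, for which I would prove that the normal-form set $\mathcal{C}_5$ of the previous theorem already spans $\mathcal{D}(X)_5$; then $\dim\mathcal{D}(X)_5\le|\mathcal{C}_5|=\dim\mathcal{C}(X)_5$ and the two dimensions agree, forcing $(\ref{c3})$ and $(\ref{c4})$ to hold in $\mathcal{D}$.

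To span $\mathcal{D}(X)_5$ by $\mathcal{C}_5$ I would rerun the reduction of the previous theorem but using only anti-commutativity, $(\ref{c1})$ and $(\ref{c2})$. The reduction of an arbitrary bracket monomial to a sum of left-normed monomials is verbatim the earlier argument. The point that needs a new proof is the sorting of the entries $x_{i_2},\dots,x_{i_5}$, which in the previous theorem was performed with the help of $(\ref{c3})$ and $(\ref{c4})$. Here I would instead substitute a left-normed word of length $k-1$ for the first slot of $(\ref{c2})$ and right-multiply by the remaining variables; for each admissible position this yields a four-term relation permuting three consecutive entries among positions $2,\dots,5$, and together with the degree-$5$ consequences of $(\ref{c1})$ these relations must be shown to reduce every left-normed monomial to $\mathcal{C}_5$. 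This is the main obstacle: a single instance of $(\ref{c2})$ on a fixed base produces only two independent relations among the six orderings of a triple, so full sortability is not formal and has to be certified by a finite rank computation in the degree-$5$ multilinear space---precisely the kind of check carried out with Albert in the earlier lemmas. Once the span is confirmed, $(\ref{c4})$ comes along for free with $(\ref{c3})$, since both are recovered from the single dimension equality.

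For the second assertion I would show that none of the three identities is a consequence of the other two by exhibiting a witness algebra for each. That anti-commutativity is indispensable is the easy case: any two-step nilpotent algebra $V\oplus W$ with $V\cdot V\subseteq W$ given by a non-skew symmetric bilinear map and all triple products zero satisfies $(\ref{c1})$ and $(\ref{c2})$ vacuously (both sides have degree $\ge4$) while failing anti-commutativity. The substantive part is separating $(\ref{c1})$ from $(\ref{c2})$: working in the free anti-commutative algebra truncated above degree $4$, I would compute the degree-$4$ multilinear component and verify that $(\ref{c2})$ does not lie in the $T$-subspace generated by $(\ref{c1})$, and conversely; each non-containment immediately yields the desired anti-commutative algebra satisfying one identity but not the other. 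Setting up and performing these two degree-$4$ rank computations is the delicate step, because $(\ref{c1})$ and $(\ref{c2})$ share the same degree and the same set of variables, so the independence is visible only after expanding both in a fixed basis of the multilinear space; as before this is best verified with Albert, and it simultaneously certifies that dropping any one of the three identities strictly enlarges the variety.
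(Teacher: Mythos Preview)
Your proposal is correct and, at its core, follows the same approach as the paper: the paper's entire proof is the single sentence ``It can be proved using computer algebra as software programs Albert,'' i.e.\ a finite linear-algebra check in degrees~$4$ and~$5$. What you add is a conceptual wrapper --- explaining why a degree-$5$ dimension comparison between $\mathcal D(X)_5$ and $\mathcal C(X)_5$ certifies $(\ref{c3})$ and $(\ref{c4})$, and why degree-$4$ rank computations (plus a trivial $2$-step nilpotent example) separate the three defining identities --- but you correctly acknowledge that the decisive steps (spanning of $\mathcal D(X)_5$ by $\mathcal C_5$, and the two non-containments in degree~$4$) are exactly the Albert computations the paper invokes. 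So the route is the same; your version just makes explicit what the machine is being asked to verify and why that suffices.
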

\begin{proof}
It can be proved using computer algebra as software programs  Albert \cite{Albert}.
\end{proof}

\begin{remark}
It is worth noting that $\mathcal{P}_2^{(+)}(X)$, as $\mathcal{A}lt^{(+)}(X)$, is a Jordan algebra. In \cite{MS2022}, it was proved that $\mathcal{P}^{(+)}(X)$ is Jordan algebra with the additional identity of the following form:
\begin{equation}\label{eq22}
\{\{a,b\},\{c,d\}\}=\{\{a,d\},\{b,c\}\}.
\end{equation}
In addition, it is proved that all identities in algebra $\mathcal{P}^{(+)}(X)$ follow from listed identities.
By direct calculations, one can be proved that $\mathcal{P}_2^{(+)}(X)$ satisfies $(\ref{eq22})$. Since $\mathcal{P}^{(+)}\subseteq\mathcal{P}_2^{(+)}$,
we obtain that all identities in algebra $\mathcal{P}_2^{(+)}(X)$ follow from commutative, Jordan and $(\ref{eq22})$ identities.
\end{remark}

\section{NAP algebra under commutator}

A basis of free nonassociative permutative algebra (shortly, NAP) was given in \cite{RSbasis}, which is stated as follows: let $X=\{x_1,x_2,\ldots\}$ be a countable set with order $x_1<x_2<\cdots$, and let $\NAP(X)$ be a free NAP algebra generated by $X$. For given nonassociative monomials $u,v\in\NAP(X)$, we write $u<v$ if one of the following conditions holds:
\begin{itemize}
    \item $\deg(u)<\deg(v)$.
    \item $\deg(u)=\deg(v),\;u=u_1u_2,\; v=v_1v_2$ and $u_1<v_1$.
    \item $\deg(u)=\deg(v),\;u=u_1u_2,\; v=v_1v_2$, $u_1=v_1$ and $u_2<v_2$.
\end{itemize}
Such an order is called deg-lex. Let $\mathcal{B}$ be the set of monomials of the form 
$$(\cdots((x_iw_1)w_2)\cdots)w_n,$$
where $w_1\leq w_2\leq\ldots\leq w_n$. This set is a basis of $\NAP(X)$.

\begin{theorem}\label{mainthm}
All identities of algebra $\NAP^{(-)}(X)$ follow from anti-commutative identity.
\end{theorem}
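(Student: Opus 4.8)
The plan is to show that the canonical surjection from the free anti-commutative algebra onto $\NAP^{(-)}(X)$ is an isomorphism; this is exactly the assertion that no identity beyond anti-commutativity holds. Write $\mathcal{AC}(X)$ for the free anti-commutative algebra on $X$. Since $\NAP^{(-)}(X)$ is anti-commutative, the assignment $x_i\mapsto x_i$ extends to a surjective homomorphism $\pi\colon\mathcal{AC}(X)\to\NAP^{(-)}(X)$, and the theorem is equivalent to the injectivity of $\pi$. As everything is graded by multidegree, injectivity follows from a dimension count: it suffices to prove that in each degree $n$ the images under $\pi$ of the basis monomials of $\mathcal{AC}(X)$ are linearly independent inside $\NAP(X)$. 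Recall that the multilinear part of $\mathcal{AC}(X)$ in degree $n$ has a basis indexed by rooted binary trees with $n$ labeled leaves and unordered children, of which there are $(2n-3)!!$; concretely these are the bracketings of $x_1,\ldots,x_n$ reduced by anti-symmetry. Since $\NAP^{(-)}(X)$ is a quotient of $\mathcal{AC}(X)$, we already have $\dim\NAP^{(-)}(n)\leq(2n-3)!!$, so the whole problem reduces to showing that the $(2n-3)!!$ commutator monomials coming from these trees are linearly independent in $\NAP(X)$.

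To establish this independence I would argue by leading terms with respect to the deg-lex order on the NAP basis $\mathcal{B}$ recalled above. Given a binary tree $T$, let $u_T\in\NAP^{(-)}(X)$ be the corresponding bracket monomial, each internal node being a commutator. Expanding every bracket $[a,b]=ab-ba$ turns $u_T$ into a signed sum of nonassociative NAP monomials, and each summand is then rewritten into the comb basis $\mathcal{B}$ by means of the NAP identity. The key lemma I would prove is that this reduction produces a well-defined leading monomial $\mathrm{LM}(u_T)\in\mathcal{B}$, occurring with coefficient $\pm1$, and that the assignment $T\mapsto\mathrm{LM}(u_T)$ is injective. Granting this, the matrix expressing the family $\{u_T\}$ in the basis $\mathcal{B}$ contains, after reordering, a signed unitriangular square submatrix on its pivot columns, so the $u_T$ are linearly independent; combined with the upper bound $\dim\NAP^{(-)}(n)\leq(2n-3)!!$ this forces equality in every degree and hence the injectivity of $\pi$.

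The main obstacle is precisely this leading-term lemma, that is, controlling the interaction between the full bracket expansion and the NAP normalization. Two points must be verified simultaneously: that the sign pattern produced by the commutators never cancels the candidate leading monomial after the comb reduction, and that distinct trees $T$ give distinct leading monomials. The natural bookkeeping device is the bijection between the NAP basis monomials and rooted labeled trees on $n$ vertices (which is what gives $\dim\NAP(n)=n^{n-1}$): I would exhibit an explicit, order-respecting embedding of the binary trees $T$ into rooted labeled trees sending $T$ to $\mathrm{LM}(u_T)$, and check that the comb reductions preserve this embedding and fix the top coefficient. Verifying these two facts is the technical heart of the proof. For degrees $3$ and $4$ the independence, and hence the absence of any extra identity, can be confirmed directly with the computer algebra system Albert, which anchors the induction.
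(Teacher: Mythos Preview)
Your overall plan coincides with the paper's: exhibit the surjection from the free anti-commutative algebra and prove injectivity by showing that distinct anti-commutative basis words have distinct leading monomials in the NAP basis $\mathcal{B}$, yielding a unitriangular matrix. Where you diverge is in your assessment of the leading-term lemma. You flag the interaction between bracket expansion and NAP normalization as ``the technical heart'', expect to fight sign cancellations, and propose to route the argument through the bijection with rooted labelled trees on $n$ vertices.

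The paper sidesteps all of this by a choice of order you do not make: a \emph{right} deg-lex order on nonassociative monomials (compare the rightmost factor first, recursively). With this order, a two-line induction on the good word $a=[u,v]$, $u<v$, gives simultaneously that (i) the monomial $\widetilde a$ obtained by erasing the brackets, i.e.\ replacing each $[u,v]$ by the product $uv$, already lies in $\mathcal{B}$ (because the last right factor of $\widetilde u$ has strictly smaller degree than $\widetilde v$), and (ii) $\widetilde a$ is the leading term of the full expansion of $a$, with coefficient $+1$. Since $\widetilde a\in\mathcal{B}$, no NAP rewriting touches the leading term, so no cancellation is possible; and the map $a\mapsto\widetilde a$ is visibly injective on good words. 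Thus the two difficulties you isolate evaporate once the order is chosen correctly, and the tree embedding you outline is unnecessary machinery. The Albert check in low degrees is likewise superfluous: the leading-term argument works uniformly from degree~$2$.
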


\begin{proof}
It is sufficient to prove that $Anti\textrm{-}com(X)\cong \NAP^{(-)}(X)$, where $Anti\textrm{-}com(X)$ is a free anti-commutative algebra. Let us recall the basis of algebra $Anti\textrm{-}com(X)$ inductively: 

\noindent $u$ is a basis monomial if
\begin{itemize}
    \item $u=vw$ and $v<w$,
    \item $v$ and $w$ are basis monomials,
\end{itemize}
where the order on monomials of $Anti\textrm{-}com(X)$ algebra is deg-lex. The set of basis monomials of $Anti\textrm{-}com(X)$ we denote by $\mathcal{C}$.

We define a right deg-lex order on monomials of the set $\mathcal{B}$ as follows: this is deg-lex order but we start to compare subwords of monomials from the right side. Let $a\in \mathcal{C}$ from $\NAP^{(-)}(X)$, and we denote by $\overline{a}$ a leading monomial of $a$ relative to the right deg-lex order. If $a\in \mathcal{C}$ from $\NAP^{(-)}(X)$ then we denote by $\widetilde{a}$ a word with parentheses from $\NAP(X)$, where parentheses replace commutators.

\begin{example}
If $a=[x_1,[x_2,x_3]]$, then 
$$a=x_1(x_2x_3)-x_1(x_3x_2)-(x_2x_3)x_1+(x_3x_2)x_1=x_1(x_2x_3)-x_1(x_3x_2)-(x_2x_1)x_3+(x_3x_1)x_2.$$
By right deg-lex order, we have $\overline{a}=x_1(x_2x_3)$, and if we place instead of commutators parentheses, then one obtains $\widetilde{a}=x_1(x_2x_3)$. Analogically, if $b=[[x_1,x_2],[x_3,x_4]]$, then $\widetilde{b}=(x_1x_2)(x_3x_4)$.
\end{example}

To prove the result, we first prove the necessary lemmas: 

\begin{lemma}\label{lemma1}
If $a\in\mathcal{C}$, then $\widetilde{a}\in \mathcal{B}$ and $\overline{a}=\widetilde{a}$.
\end{lemma}
\begin{proof}
Firstly, let us show that $\widetilde{a}\in \mathcal{B}$. 
We use induction on the length of monomials from $\mathcal{C}$. The base of induction is $n=3$. We have 
$$\widetilde{[x_i,[x_j,x_k]]}=x_i(x_jx_k)\in\mathcal{B},$$
where $j<k$.
If $a=[u,v]$ then by inductive hypothesis $\widetilde{u},\widetilde{v}\in\mathcal{B}$, and 
since $u<v$, we obtain $uv\in\mathcal{B}$.

Let us prove that $\overline{a}=\widetilde{a}$. To prove it we show that 
$$\overline{a}=\overline{[u,v]}=\overline{uv-vu}=\overline{uv}=\widetilde{a}.$$
We use analogical induction on the length of monomials from $\mathcal{C}$. The base of induction is given above. If $a=[u,v]$, then by inductive hypothesis $\overline{u}=\widetilde{u}$, $\overline{v}=\widetilde{v}$, and since $u<v$, we obtain
$$\overline{uv}=\overline{u}\;\overline{v}=\widetilde{u}\;\widetilde{v}=\widetilde{a}.$$
For $vu$, we consider 2 cases:

1. $vu=([v_1,v_2])u$ and $v_2<u$. For the first case, we have 
$$\overline{vu}=(\overline{v_1}\;\overline{v_2})\;\overline{u}=(\widetilde{v_1}\;\widetilde{v_2})\;\widetilde{u}.$$

2. $vu=([v_1,v_2])u$ and $u<v_2$. For the second case, we have
$$\overline{vu}=(\overline{v_1}\;\overline{u})\;\overline{v_2}=(\widetilde{v_1}\;\widetilde{u})\;\widetilde{v_2}.$$

Since $\overline{uv}=\widetilde{u}\;\widetilde{v}$, $\overline{vu}=(\widetilde{v_1}\;\widetilde{v_2})\;\widetilde{u}$ or $(\widetilde{v_1}\;\widetilde{u})\;\widetilde{v_2}$, we obtain
$$\overline{vu}<\overline{uv}=\widetilde{a}.$$

\end{proof}

\begin{lemma}\label{lemma2}
If $a_1,a_2,\ldots,a_n\in\mathcal{C}$, then $\overline{a_i}\neq \overline{a_j}$ for $i\neq j$.
\end{lemma}
\begin{proof}
By Lemma \ref{lemma1}, we have $\overline{a_i}=\widetilde{a_i}$, $\overline{a_j}=\widetilde{a_j}$ and $\widetilde{a_i},\widetilde{a_j}\in\mathcal{B}$. Since the set $\mathcal{B}$ is a basis of $\NAP\<X\>$, we obtain $\widetilde{a_i}\neq\widetilde{a_j}$.
\end{proof}

Now, we are ready to finish proving our result. By Lemma \ref{lemma2}, all monomials from the set $\mathcal{C}$ in $\NAP^{(-)}(X)$ are linearly independent. Since the algebras $Anti\textrm{-}com(X)$ and $\NAP^{(-)}(X)$ have the same bases, we obtain the required isomorphism.
\end{proof}

\begin{corollary}
All identities of $\NAP^{(+)}(X)$ algebra follow from commutative identity.
\end{corollary}
\begin{proof}
This result can be proved similarly to Theorem \ref{mainthm}.
\end{proof}

\subsection*{Acknowledgments}
The authors were supported by the Science Committee of the Ministry of Education and Science of the Republic of Kazakhstan (Grant No. AP???).

\end{document}